\documentclass{amsart}

\usepackage{amssymb,array}
\usepackage{amsmath}
\usepackage{amsthm}
\usepackage{amsbsy,mathrsfs}
\usepackage{bm}
\usepackage[english]{babel}
\usepackage{graphicx}
\usepackage{color}
\usepackage{graphicx}
\usepackage{color}
\usepackage{hyperref}
\usepackage[toc,page]{appendix}
\usepackage{thmtools}
\usepackage{thm-restate}

\usepackage{cleveref}

\theoremstyle{plain}
\newtheorem{theorem}{Theorem}[section]
\newtheorem*{theorem*}{Theorem}

\newtheorem*{question*}{Question}
\newtheorem*{main*}{Main Theorem}
\theoremstyle{definition}
\newtheorem{definition}[theorem]{Definition}

\newcommand{\K}{\mathbb{K}}
\newcommand{\Hom}{\operatorname{Hom}}
\newcommand{\tr}{\operatorname{tr}}

\title[Non-injectivity of trace map]{Non-Injectivity of the trace map for character varieties}
\author{Deblina Das}
 \address{Department of Mathematics, Indian Institute of Technology Palakkad}
 \email{212114002@smail.iitpkd.ac.in}
 \author{Arpan Kabiraj}
 \address{Department of Mathematics, Indian Institute of Technology Palakkad}
\email{arpaninto@iitpkd.ac.in}

\begin{document}

\maketitle
\begin{abstract}
    
    Given a closed oriented surface $\Sigma$ of genus at least two, the Goldman trace map defines a function from the vector space generated by the free homotopy classes of oriented closed curves to the Poisson algebra of regular functions on the $G$-character variety where $G$ is a reductive (real or complex) linear Lie group. In this article, we prove that this map is never injective. For each $n$, we construct an explicit nonzero element of the vector space  whose associated trace function vanishes on every homomorphism from  $\pi_1(\Sigma)$ to $GL_n$. The construction is based on the Amitsur-Levitzki identity, together with a choice of words in a free subgroup of $\pi_1(\Sigma)$, ensuring that no cancellation occurs at the level of free homotopy classes. This gives a uniform family of explicit kernel elements, proving Goldman’s predicted non-injectivity of the trace map in arbitrary rank.
\end{abstract}
\section{Introduction}

Let $\Sigma$ be a closed oriented surface of genus $g\geq 2$ with the fundamental group $\pi_1$. 
Let ${\pi}$ be the set of free homotopy classes of oriented
closed curves on $\Sigma$, equivalently the set of conjugacy classes in $\pi_1(\Sigma)$. Given $\alpha\in \pi_1$, we denote its free homotopy class by $|\alpha|$.  Unless otherwise specified, we assume all the objects to be over the ring of real or complex numbers, which is denoted by $\K$. We denote the vector space generated by $\pi$ over $\K$ with $\K\pi.$

Let $G$ be a reductive linear Lie group. Consider  $\Hom(\pi_1,G)/G,$ the set of all homomorphisms from $\pi_1$ to $G$ quotiented by the conjugation action in the sense of invariant theory, known as the $G$-character variety (see \cite{MR2931326}).  In his seminal papers \cite{goldman_invariant_1986}, \cite{MR762512}, Goldman introduced and studied a symplectic structure on the $G$- character variety $\Hom(\pi_1,G)/G$ which generalizes both the Atiyah-Bott and Weil-Petersson symplectic structures, thereby obtaining a Poisson algebra structure on the regular functions on $ G$-character variety. For every $\alpha\in \pi_1$, we have a regular map $\tr_\alpha:\Hom(\pi_1,G)/G\to \K$ defined by $\tr_\alpha(\rho)=\text{trace}(\rho(\alpha))$. Observe that $\tr_\alpha$ is well defined for $|\alpha|$. 
\begin{definition}
   The map $\tr$ from $\K\pi $ to the Poisson algebra of regular functions on the $G$-character variety defined by $\tr(|\alpha|)=\tr_{|\alpha|}$ is called the \emph{Goldman trace map}.
\end{definition}

Goldman explicitly computed the Poisson bracket of trace functions in terms of
the intersection data of the corresponding curves \cite{goldman_invariant_1986}. In particular, for 
$G=GL_n$, the bracket has the following form
$$
\{\tr_\alpha,\tr_\beta\}
=
\sum_{p\in \alpha\cap\beta}
\epsilon(p;\alpha,\beta)\tr_{\alpha*_p\beta},
$$
where $\epsilon(p;\alpha,\beta)$ is the sign at the intersection point $p$ and 
$\alpha*_p\beta$ is the loop product  of $\alpha$ and $\beta$ at $p$. This leads Goldman to the following definition of a Lie bracket on $\K\pi$ known as the \emph{Goldman Lie bracket}: $$[|\alpha|,|\beta|]=\sum_{p\in\alpha\cap\beta}\epsilon(\alpha,\beta)|\alpha*_p\beta|. $$ 
Therefore for $GL_n$, the Goldman trace map is a Lie algebra homomorphism. Goldman’s construction raises a natural and compelling question: 
\begin{question*}Is the Goldman trace map injective for all reductive  linear Lie groups $G$. 
\end{question*}
Goldman anticipated that the trace map is never injective \cite[Section 5]{goldman_invariant_1986}. More precisely, he wrote that 
    ``this homomorphism seems never to be injective:  linear relations between traces of words in $G$ gives rise to elements of the kernel".
In low rank, this phenomenon is already visible
from classical trace identities. For instance, for $A\in SL_2$, the Cayley-Hamilton
theorem gives the following trace relation
$$
{\tr}(AB)+{\tr}(A^{-1}B)
=
{\tr}(A){\tr}(B)
.$$
 Such identities are closely related to the existence of non-conjugate words in free groups which have the same trace under all
$SL_2$-representations. This topic has a substantial mathematical history. Horowitz \cite{MR314993} studied the problem of when two elements of a free group have the same character under all
representations into $SL_2$, and constructed large families of non-conjugate
$SL_2$-trace-equivalent words. See \cite{MR2044556},\cite{MR2272719}, \cite{MR2026843}, \cite{MR2272138}, \cite{MR1622287} and the references therein for more details and recent developments in this direction. 

A tempting approach to solve the higher rank problem would be to consider the  set of non-conjugate elements of $SL_2$-trace-equivalent words mentioned above. However, the higher rank case becomes much more difficult. Firstly, the special
trace identities available for $SL_2$ do not have equally simple analogs for $SL_n$ or $GL_n$ when $n\geq 3$. This distinction is reflected in the work of Lawton, Louder, and McReynolds in \cite{MR3641838},
where the authors discussed the difficulty of finding $SL_n$-trace-equivalent pairs in free
groups for $n>2$. In particular, the classical pairs arising from Horowitz's
construction need not remain trace-equivalent in rank three, and computational
evidence suggests that such pairs are much harder to find in higher rank. 

The purpose of this article is to prove Goldman's predicted non-injectivity of the
trace map for Linear Lie groups of arbitrary rank. Rather than searching
for a pair of non-conjugate words with identical trace, we use a universal
polynomial identity for matrices to produce a nontrivial \emph{linear combination}
of free homotopy classes whose trace function vanishes identically. The identity
we use is the Amitsur-Levitzki theorem \cite{MR36751}:
\begin{theorem*}[Amitsur-Levitzki theorem \cite{MR36751}]
Let $M_n(\K)$ be the ring of $n\times n$
matrices over $\K$. For all
$X_1,\ldots,X_{2n}\in M_n(\K)$, we have
$$
\sum_{\sigma\in S_{2n}}
\operatorname{sgn}(\sigma)
X_{\sigma(1)}X_{\sigma(2)}\cdots X_{\sigma(2n)}
=0
$$ where $\operatorname{sgn}(\sigma)$ is the sign of the permutation $\sigma$. 
\end{theorem*}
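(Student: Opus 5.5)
The plan is to follow Rosset's exterior-algebra proof. It turns the statement into a nilpotency statement for a single matrix over a commutative ring, which Cayley--Hamilton and Newton's identities then settle. Throughout I use that $\K$ is $\mathbb{R}$ or $\mathbb{C}$, so characteristic zero is available.

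\textbf{Step 1: packaging the $X_i$ into one matrix.} Let $E$ be the exterior (Grassmann) algebra over $\K$ on generators $e_1,\ldots,e_{2n}$, and let $E_0\subset E$ be its even part. Note that $E_0$ is a commutative $\K$-algebra. Put
$$X=\sum_{i=1}^{2n} X_i e_i \in M_n(E).$$
Expanding $X^{2n}$, any term with a repeated $e_i$ vanishes. Reordering $e_{\sigma(1)}\cdots e_{\sigma(2n)}$ to $e_1\cdots e_{2n}$ produces exactly $\operatorname{sgn}(\sigma)$. Hence
$$X^{2n}=\Big(\sum_{\sigma\in S_{2n}}\operatorname{sgn}(\sigma)X_{\sigma(1)}\cdots X_{\sigma(2n)}\Big)\,e_1e_2\cdots e_{2n}.$$
Since $e_1\cdots e_{2n}\neq 0$ and the matrix coefficient has entries in $\K$, it suffices to prove that $X^{2n}=0$.

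\textbf{Step 2: even powers of $X$ are traceless.} Let $Y=X^2$. Its entries are sums of products of two generators, so $Y\in M_n(E_0)$ is a matrix over a commutative ring. For $k\ge 1$ I will show $\tr(Y^k)=\tr(X^{2k})=0$. Expanding,
$$\tr(X^{2k})=\sum_{i_1,\ldots,i_{2k}}\tr(X_{i_1}\cdots X_{i_{2k}})\,e_{i_1}\cdots e_{i_{2k}}.$$
Apply the cyclic shift $(i_1,\ldots,i_{2k})\mapsto(i_2,\ldots,i_{2k},i_1)$. The scalar trace $\tr(X_{i_1}\cdots X_{i_{2k}})$ is invariant, since these are matrices over $\K$. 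The Grassmann monomial changes sign, because moving one odd generator past $2k-1$ others gives the factor $(-1)^{2k-1}=-1$. The shift is a bijection on index tuples, so the sum equals its own negative, and in characteristic zero it vanishes.

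\textbf{Step 3: nilpotency.} Now $Y$ is an $n\times n$ matrix over the commutative $\mathbb{Q}$-algebra $E_0$ with $\tr(Y^k)=0$ for $k=1,\ldots,n$. Newton's identities express the coefficients of the characteristic polynomial of $Y$ as polynomials, with rational coefficients, in the power sums $\tr(Y^k)$. This requires dividing by $k\le n$, which is legitimate here. So all non-leading coefficients vanish and $\chi_Y(t)=t^n$. Cayley--Hamilton holds over any commutative ring, so $Y^n=0$, that is, $X^{2n}=0$. By Step 1 this proves the theorem.

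The main obstacle is Step 2, specifically getting the sign bookkeeping right. One must see that a cyclic shift of an even-length word is an odd permutation of the Grassmann factors, while the matrix trace is untouched. It is also essential that $Y=X^2$ lands in the commutative subring $E_0$; otherwise neither Newton's identities nor Cayley--Hamilton could be applied. Everything else is standard.
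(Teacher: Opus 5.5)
Your proof is correct: it is Rosset's exterior-algebra proof, and each step checks out. In Step 1 the reordering of $e_{\sigma(1)}\cdots e_{\sigma(2n)}$ produces exactly $\operatorname{sgn}(\sigma)$. In Step 2 the entries of $X^2$ are of degree two, so they lie in the central, hence commutative, subalgebra $E_0$. The cyclic shift of an even-length index word costs the sign $(-1)^{2k-1}$ while fixing the scalar trace, which forces $\tr(X^{2k})=0$. Step 3 correctly applies Newton's identities and Cayley--Hamilton over the commutative $\mathbb{Q}$-algebra $E_0$.

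The paper gives no proof of this statement. It quotes the theorem from Amitsur and Levitzki's original paper and uses it as a black box, and only for matrices over $\mathbb{R}$ or $\mathbb{C}$ (namely the $\rho(x_i)$). So your route is genuinely different from anything in the text.

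The original argument is a considerably longer combinatorial one and works directly over an arbitrary commutative ring. Your argument is short and self-contained, but it divides by $2$ in Step 2 and by $1,\dots,n$ in Step 3, so as written it is a characteristic-zero proof. That is exactly the setting the paper needs.

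If you wanted the general version, it follows from yours because the standard polynomial has integer coefficients. Apply your result to generic matrices whose entries are independent indeterminates over $\mathbb{Q}$. This gives the identity as a polynomial identity with integer coefficients, which then specializes to matrices over any commutative ring.
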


Our main result is the following.
\begin{main*}\label{thm:main}
Let $\Sigma$ be a closed oriented surface of genus $g\geq 2$. For every $n\geq 1$, the Goldman trace map is not injective.
\end{main*}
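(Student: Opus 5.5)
The idea is to turn the Amitsur--Levitzki identity into a trace identity and then feed it a well-chosen tuple of group elements. Fix $n\geq 1$ and choose elements $w_1,\dots,w_{2n}\in\pi_1$. Consider the element
$$
\xi=\sum_{\sigma\in S_{2n}}\operatorname{sgn}(\sigma)\,|w_{\sigma(1)}w_{\sigma(2)}\cdots w_{\sigma(2n)}|\in\K\pi .
$$
For any homomorphism $\rho:\pi_1\to GL_n(\K)$, set $X_i=\rho(w_i)\in M_n(\K)$. Applying the Amitsur--Levitzki theorem and taking traces gives $\tr(\xi)(\rho)=\operatorname{trace}\bigl(\sum_\sigma \operatorname{sgn}(\sigma)X_{\sigma(1)}\cdots X_{\sigma(2n)}\bigr)=0$. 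Hence $\tr(\xi)=0$ on the $GL_n$-character variety. The same holds for any reductive linear $G\subset GL_n$, since the trace functions are restrictions. So the whole proof reduces to showing that $\xi\neq 0$ in $\K\pi$.

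Cyclic rotation by the $2n$-cycle $c$ preserves the free homotopy class of a product, and $\operatorname{sgn}(c)=-1$ because $2n$ is even. Consequently, naive choices cancel: the terms for $\sigma$ and $\sigma\circ c$ give the same class with opposite signs. In fact the full trace identity $\operatorname{trace}(S_{2n})=0$ holds formally for this reason. I therefore cannot take $\xi$ as written.

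Instead I will use a non-cyclic variant. Put $Y=\sum_\sigma \operatorname{sgn}(\sigma)X_{\sigma(1)}\cdots X_{\sigma(2n)}=0$ and multiply by one more matrix $X_0=\rho(w_0)$ before taking the trace. This gives
$$
\xi=\sum_{\sigma}\operatorname{sgn}(\sigma)\,|w_0w_{\sigma(1)}\cdots w_{\sigma(2n)}|,
$$
which still lies in $\ker\tr$. The extra letter $w_0$ breaks the cyclic symmetry.

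Take $w_0,w_1,\dots,w_{2n}$ to be a free basis of a free subgroup $F_{2n+1}\subset\pi_1$. Such a subgroup exists since $g\geq2$: $\pi_1$ contains $F_2$, which contains free subgroups of every finite rank, for example those generated by $a^kba^{-k}$. The $(2n)!$ words $w_0w_{\sigma(1)}\cdots w_{\sigma(2n)}$ are cyclically reduced in $F_{2n+1}$. Each contains each basis letter exactly once, and the cyclic rotation beginning at $w_0$ recovers $\sigma$. Therefore they are pairwise non-conjugate in $F_{2n+1}$, and the coefficients in $\xi$ cannot cancel inside $F_{2n+1}$.

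The main obstacle is to upgrade non-conjugacy in $F_{2n+1}$ to non-conjugacy in $\pi_1$, because a free subgroup need not be malnormal and conjugacy classes could merge in the surface group. My plan is to choose the free subgroup so that this cannot happen. I would take $F_{2n+1}$ to be the image of $\pi_1$ of an incompressible subsurface with boundary, or better a subgroup of finite index pulled back from a covering. Concretely, I would use a finite cover $\widetilde\Sigma\to\Sigma$ of large genus and take the $w_i$ to be standard free generators of a subsurface of $\widetilde\Sigma$.

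Then I would use the following fact. In a surface group, two elements of the fundamental group of an essential subsurface $S$ (no disk or annulus complements, so $\pi_1S\to\pi_1\Sigma$ is injective) are conjugate in $\pi_1\Sigma$ only if they are conjugate in $\pi_1S$, except for peripheral elements. This holds because free homotopy classes of closed curves in $S$ that are not boundary-parallel map injectively to those of $\Sigma$, for example via hyperbolic geodesic representatives, since geodesics in a geodesic-boundary subsurface stay inside it.

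Alternatively, and more simply, one can avoid subsurfaces. Take $w_i=a^{k_i}b$-type words in two standard generators $a,b$ of a one-holed torus or pair of pants, and compare the cyclically reduced forms directly in $\pi_1$. Small cancellation with respect to the single surface relator then shows that these words are distinct cyclic words in $\pi_1$. I expect verifying this non-conjugacy carefully to be the bulk of the work. Once it is done, $\xi$ is a nonzero element with $\tr(\xi)=0$, proving the Main Theorem.
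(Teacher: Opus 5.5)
Your kernel argument is correct and is the paper's. You rightly observe that the bare standard polynomial cancels formally in $\K\pi$ (a $2n$-cycle is odd), and that inserting one extra factor $w_0$ before taking traces breaks this symmetry; the paper does exactly this with $w_0=a$.

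The gap is in the step that carries the whole theorem, namely $\xi\neq 0$ in $\K\pi$. Non-conjugacy in the abstract free group $F_{2n+1}$ does not transfer to $\pi_1$, as you note, and your concrete repair fails for two reasons. First, an embedded essential subsurface $S\subset\Sigma$ satisfies $\chi(S)\geq\chi(\Sigma)$, so $\pi_1(S)$ has rank at most $2g-1$. For $n\geq g$ you therefore cannot take $w_0,\dots,w_{2n}$ to be free generators of one. Second, if $S$ lies in a finite cover $\widetilde\Sigma$, your geodesic argument only gives non-conjugacy in $\pi_1(\widetilde\Sigma)$. Since $S$ is merely immersed in $\Sigma$, distinct closed geodesics of $\widetilde\Sigma$ (for instance translates under a deck transformation) can project to the same closed geodesic of $\Sigma$. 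So classes can merge in $\pi_1(\Sigma)$. Your last alternative points the right way but is only announced, so as written $\xi\neq 0$ is not proved.

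The missing idea is that the $w_i$ need not form a free basis of anything. Fix an embedded one-holed torus $Y$ with $\pi_1(Y)=F(a,b)$ and check non-conjugacy directly in $F(a,b)$. The paper takes $w_0=a$ and $w_i=b^ia$. Then $W_\sigma=a\,b^{\sigma(1)}a\cdots a\,b^{\sigma(2n)}a$ is a positive cyclically reduced word. Its cyclic sequence of $b$-exponents $\sigma(1),\dots,\sigma(2n),0$ has a unique $0$, which pins down the rotation. Next, $[W_\sigma]\neq 0$ in $H_1(Y;\mathbb{Z})$, so $W_\sigma$ is not conjugate into $\langle\partial Y\rangle=\langle[a,b]\rangle$. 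The conjugacy theorem for $\pi_1(Y)*_{\langle\partial Y\rangle}\pi_1(Z)$ then transfers non-conjugacy to $\pi_1(\Sigma)$. Your geodesic argument would also do this, since $Y$ is embedded.

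Your own choice $w_k=a^kba^{-k}$ works the same way once checked in $F(a,b)$ rather than in $F_{2n+1}$. The reduced cyclic word is $b\,a^{d_1}b\,a^{d_2}\cdots b\,a^{d_{2n+1}}$, where the $d_j$ are the successive differences of $0,\sigma(1),\dots,\sigma(2n),0$. A nontrivial rotation would have to translate the set $\{0,\dots,2n\}$ onto itself, which is impossible. Its homology class is $(2n+1)[b]\neq 0$, so it is not peripheral either.
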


The idea of the proof is to convert the Amitsur-Levitzki identity into a
linear relation among Goldman trace functions. We choose an embedded one-holed torus $Y\subset \Sigma$ and write $\pi_1(Y)=F(a,b), $ the free group generated by $a$ and $b$. For any positive integer $n$, we construct $2n$ words whose alternating products are pairwise non-conjugates in $F(a,b)$. We then use the Amitsur-Levitzki theorem together with a standard fact about conjugacy of two elements in amalgamated products to conclude our theorem. 

Our construction differs from the classical trace-equivalent word approach in an essential way. We do not produce two non-conjugate elements with the same
trace under every representation. Instead, we produce an alternating sum of many distinct conjugacy classes whose associated trace functions cancel because of a
universal matrix identity. 

\section{Proof of the Main Theorem \ref{thm:main}}

\begin{proof}
Let $Y$ be an embedded one-holed torus
in $\Sigma$ and  $\pi_1(Y)=F(a,b)$
be the free group generated by two elements $a$ and $b$. Put $m=2n$
and define
$$
x_i=b^i a\in \pi_1(Y) \quad 
\text{for} \quad 1\leq i\leq m. $$
Now consider 
$$
\Theta_n=\sum_{\sigma\in S_m}\operatorname{sgn}(\sigma)\,\left|a\,x_{\sigma(1)}x_{\sigma(2)}\cdots x_{\sigma(m)}\right|\in \K{\pi}.$$
Let $\rho:\pi\longrightarrow GL_n$
be any representation. Set
$ A=\rho(a),$ and $
X_i=\rho(x_i),$ for $1\leq i\leq m.$
By the Amitsur--Levitzki theorem, 
$$
\sum_{\sigma\in S_{2n}}
\operatorname{sgn}(\sigma)
X_{\sigma(1)}X_{\sigma(2)}\cdots X_{\sigma(2n)}
=0.
$$
Multiplying on the left by $A$ and taking trace, we obtain
$$
\sum_{\sigma\in S_{2n}}
\operatorname{sgn}(\sigma)
\operatorname{tr}\left(
A X_{\sigma(1)}X_{\sigma(2)}\cdots X_{\sigma(2n)}
\right)
=0.
$$
However, observe that
$$
A X_{\sigma(1)}X_{\sigma(2)}\cdots X_{\sigma(2n)}
=\rho\left(a\,x_{\sigma(1)}x_{\sigma(2)}\cdots x_{\sigma(2n)}\right).
$$

Since $\rho$ was arbitrary, it follows that $\Theta_n$ is in the kernel of the trace map.  

It remains to show that $\Theta_n\neq 0$ in $\K{\pi}$.

For $\sigma\in S_m$, write
$$W_\sigma=a\,x_{\sigma(1)}x_{\sigma(2)}\cdots x_{\sigma(m)}.$$
Since $x_i=b^i a$, this word has the form
$$W_\sigma=a\,b^{\sigma(1)}a\,b^{\sigma(2)}a
\cdots a\,b^{\sigma(m)}a.$$
We claim that the conjugacy classes $|W_\sigma|$ are pairwise distinct in $\pi$.

First, we prove that they are distinct within the free group $F(a,b)=\pi_1(Y)$. Each $W_\sigma$ is a
cyclically reduced word in $F(a,b)$. In a free group, two cyclically reduced words
are conjugate if and only if one is a cyclic permutation of the other. The word $W_\sigma$ has a cyclic sequence of $b$-block lengths
$$
\sigma(1),\sigma(2),\ldots,\sigma(m),0.
$$
Here the final $0$ records the passage from the last letter $a$ of $W_\sigma$ back
to the first letter $a$ in the cyclic word. This $0$ occurs exactly once. Hence, each
cyclic sequence corresponding to $W_\sigma$ has a distinguished position, namely the position of $0$. Therefore, if $W_\sigma$ and
$W_\tau$ are conjugate in $F(a,b)$, their cyclic block sequences must agree after
a cyclic rotation, and the unique $0$ must align with the unique $0$. It follows
that
$$
\sigma(1)=\tau(1),\quad
\sigma(2)=\tau(2),\quad\ldots,\quad
\sigma(m)=\tau(m),
$$
 hence $\sigma=\tau$. Thus, elements $W_\sigma$ are pairwise non-conjugate
in $\pi_1(Y)$.

We now show that they remain pairwise non-conjugate in the whole surface group
$\pi_1$. Let
$Z=\overline{\Sigma\setminus Y}.
$ Since $Y$ is an embedded incompressible one-holed torus, the Seifert--van Kampen
theorem gives an amalgamated product decomposition
$$
\pi_1(\Sigma)
\cong
\pi_1(Y)*_{\langle \partial Y\rangle}\pi_1(Z),
$$
where $\langle \partial Y\rangle$ is the cyclic subgroup generated by the boundary
curve of $Y$.

Observe that no $W_\sigma$ is conjugate to some element of 
$\langle\partial Y\rangle$ inside $\pi_1(Y)$. Indeed, with a suitable choice of orientation,
the boundary element of the one-holed torus is represented by the commutator $[a,b].$
Therefore, every element conjugate to a power of $\partial Y$ has trivial image in $
H_1(Y;\mathbb Z)\cong \mathbb Z[a]\oplus \mathbb Z[b].$
On the other hand,
$[W_\sigma]=(m+1)[a]+
\left(\sum_{i=1}^{m} i\right)[b]$
in $H_1(Y;\mathbb Z)$. Since $m=2n\geq 2$, this is nonzero. Hence $W_\sigma$ is not
conjugate into $\langle\partial Y\rangle$.

Now suppose that $W_\sigma$ and $W_\tau$ are conjugate in $\pi_1$. Since
both lie in $\pi_1(Y)$ and neither is conjugate into $\langle\partial Y\rangle$,
by  \cite[Theorem 4.6]{MR422434}, $W_\sigma$ and
$W_\tau$ are already conjugate in $\pi_1(Y)$, 
which implies $\sigma=\tau$.
\end{proof}

\bibliographystyle{amsplain}

\bibliography{gla.bib}

\end{document}